\newcommand{\hide}[1]{}
\title{\LARGE \bf
Distributed optimization on directed graphs based on inexact ADMM with partial participation
}
\author{Dingran Yi and Nikolaos M. Freris\thanks{School of Computer Science, University of Science and Technology of China, Hefei, Anhui, 230027, China. 
Emails: \texttt{ydr0826@mail.ustc.edu.cn, nfr@ustc.edu.cn}.}}
\begin{document}

\maketitle
\thispagestyle{empty}
\pagestyle{empty}

\begin{abstract}

We consider the problem of minimizing the sum of cost functions pertaining to agents over a network whose topology is captured 
by a directed graph (i.e., asymmetric communication). We cast the problem into the ADMM setting, via a consensus constraint, 
for which both primal subproblems are solved inexactly. In specific, the computationally demanding local minimization step is replaced by a single gradient step, while the averaging step 
is approximated in a distributed fashion. Furthermore, partial participation is allowed in the implementation of the algorithm.  Under 
standard assumptions on strong convexity and Lipschitz continuous gradients, we establish linear convergence and characterize the rate in terms of the connectivity of the graph and the conditioning of the problem. Our line of analysis provides a sharper convergence rate compared to Push-DIGing. Numerical experiments corroborate the merits of the proposed solution in terms of superior rate as well as computation and communication savings over baselines.

\end{abstract}

\section{INTRODUCTION}

Distributed multi-agent optimization has found paramount applications across various fields such as in control \cite{nedic2018distributed}, signal processing \cite{dimakis2010gossip,sopasakis2016accelerated}, machine learning and data mining \cite{verbraeken2020survey,vlachos2015compressive}, and wireless sensor networks \cite{sensor,intanagonwiwat2003directed}. The archetypal problem is to 
\begin{equation}
    \underset{\hat{x}\in\mathbb{R}^d}{\text{minimize}}\qquad f\left(\hat{x}\right)=\sum_{i=1}^n f_i(\hat{x}) , \label{prob1}
\end{equation} 
where $f_i(\cdot)$ is a local cost function corresponding to agent~$i$. Distributed optimization amounts to solving (\ref{prob1}) over the common decision vector $\hat{x}$ by a synergy of local computations and 
communication exchanges. In specific, agent $i$ holds a local variable $x_i$ which is updated based on its
local cost $f_i(\cdot)$ alongside information obtained by its neighbors 
(e.g., their local variables or gradients), 
and \emph{consensus} (i.e., $x_1=\cdots=x_n$) is achieved 
asymptotically.

There has been extensive work on the subject, especially for the case that the communication network topology is symmetric (i.e., 
an undirected graph) \cite{subgrad,makhdoumi2017convergence,chang2014multi,7942055}. Nevertheless, it is quite common to have unidirectional communication links in wireless networks due to heterogeneity in 
transceivers or perceived levels of interference \cite{intanagonwiwat2003directed}. Algorithms on directed graphs can be roughly classified into: a) primal 
methods and b) primal-dual methods, most notably based on the \emph{Alternating Direction Method of Multipliers} 
(ADMM) \cite{boyd2011distributed}. In the former case, a local gradient step is used along with weighted averaging across neighboring agents. A sublinear convergence is established \cite{nedic2014distributed,nedic2016stochastic} even for strongly convex problems, while linear convergence can be retrieved using gradient
tracking \cite{nedic2017achieving}.

Recent work has developed ADMM-based methods for distributed optimization on directed graphs \cite{rokade2020distributed,jiang2021asynchronous,khatana2020dc}. In specific, 
\cite{rokade2020distributed} uses dynamically updated weights for local averaging and establishes linear convergence under strong convexity. The authors in \cite{jiang2021asynchronous} adopted $\epsilon-$inexact consensus and proposed an asynchronous method that requires a finite number of communication steps per round. \cite{khatana2020dc} allows for both equality and inequality constraints and establishes either sublinear rate to the exact solution or linear 
rate to a neighborhood of optimality.

Notwithstanding, the prior art based on ADMM requires an exact solution of the local subproblems, which may incur heavy 
computational burden unsuitable for resource-constrained devices. Besides, it is not amenable to partial agent participation, an 
imperative requirement in real scenarios where user unavailability is common (due to variable operating conditions such as battery level
and network bandwidth) and synchronization is difficult \cite{freris2010fundamental} . 
In order to address these challenges, we propose \emph{IPD (Inexact, Partial participation, Directed graph)}.\\
\textbf{Contributions}:
\begin{enumerate}
\item We propose a primal-dual method for distributed optimization on directed graphs that alleviates the computational load by inexact solution of the local subproblems (using a single gradient step).
\item Under standard assumptions, we establish linear convergence and characterize the rate with respect to the graph connectivity and the conditioning of the problem (Thm.~1 and Cor.~1).
\item The method allows for partial user participation at each iteration of the algorithm. Thm.~2 establishes linear convergence with high probability and reveals its dependency on the activation probability.
\item Our analysis provides a sharper characterization of the rate compared to the state-of-the-art Push-DIGing method with which it shares comparable  communication and computation costs.
\item Experiments on two real-life machine learning datasets demonstrate merits in terms of a) faster rate compared to Push-DIGing, b) computation and communication savings over baselines.
\end{enumerate}

\section{Notation}
\label{sec2}
The network topology is captured by a directed graph $\mathcal{G}=\left\{\mathcal{V},\mathcal{E}\right\}$, where $\mathcal{V}$ is the set of agents (with cardinality $n:=\lvert \mathcal{V}\rvert$) and $\mathcal{E}$ is the set of directed communication links: $\left(i,j\right) \in \mathcal{E}$ if and only if agent $i$ can send a message to agent $j$. We define the set of agent $i$'s in-neighbors as $\mathcal{N}_i^{\text{in}}:=\left\{j:\left(j,i\right) \in \mathcal{E}\right\}$, and its out-degree by $d_i:= |\left\{j: (i,j)\in\mathcal{E}\right\}|$. The maximum out-degree is denoted by $d_{\max}:=\max_{i\in\mathcal{V}} d_i$, while  $D:=\text{diag}(d_1,d_2,\hdots, d_n)$ is a diagonal 
matrix with entries the out-degrees of the corresponding agents. The adjacency matrix $A\in\mathbb{R}^{n\times n}$ satisfies $A_{ij}=1$ if $(j,i) \in \mathcal{E}$ and $A_{ij}=0$ otherwise. We further define $P:=\left(I+AD^{-1}\right)/2$ and for $\lambda_2(P)$ its second largest eigenvalue, while we use $\phi$ for the diameter of the graph. All vectors are meant as column vectors. The notation $x_i\in\mathbb{R}^d$ is for the local vector of agent $i$, while $x\in
\mathbb{R}^{nd}$ is reserved for the concatenation, i.e.,  $x^T:=[x_1^T,\hdots,x_n^T]^T$ (and analogously for other variables). We let  $\bar{x}^k:=\frac{1}{n}\sum\limits_{i=1}^n x_i^k\otimes \textbf{1}_n$, where $\mathbf{1}$ means the all-one vector, and $x^k_\perp:=x^k-\bar{x}^k$ where superscript $k$ corresponds to the $k-$th iterates; analogous definitions apply for $\bar{z}^k$ and $z^k_\perp$. Additionally, we define $F\left(x\right):=\sum\limits_{i=1}^n f_i\left(x_i\right)$ and  the consensus set $\mathcal{C}:=\left\{x:x_1=\cdots=x_n\right\}$ with corresponding indicator function:
\begin{gather*}
    I_{\mathcal{C}}\left(x\right)=
    \begin{cases}
    0, \quad x \in \mathcal{C}\\
    \infty, \quad \text{else}
    \end{cases}
\end{gather*}
Finally, we let $[n]:=\{1,\hdots,n\}$ for $n\in\mathbb{N}$. 



\section{Proposed Method}
\label{sec3}
To cast problem (\ref{prob1}) into the setting of ADMM, we re-write is as:
\begin{eqnarray}
    \underset{x,z\in\mathbb{R}^{nd}}{\text{minimize}}&\ &F\left
(x\right)+I_{\mathcal{C}}\left(z\right) \notag \\
    \text{subject to} &\ &x=z \label{prob2}
\end{eqnarray}
The augmented Lagrangian (AL) for (\ref{prob2}) is given by:
\begin{gather*}
    L_{\rho}\left(x,z,y\right)=f\left(x\right)+I_{\mathcal{C}}\left(z\right)+y^T\left(x-z\right)+\frac{\rho}{2}\norm{x-z}^2,
\end{gather*}
where $y\in\mathbb{R}^{nd}$ is the dual variable and $\rho>0$. The iterations of ADMM are given by sequential alternating minimization of the AL over $x,z$ plus a dual ascent step, as follows: 
\begin{subequations}
\begin{align}
    x^{k+1}&=\underset{x}{\text{argmin}}\,\,\, L_{\rho}\left(x,z^k,y^k\right),\label{exactx}\\
    z^{k+1}&=\underset{z}{\text{argmin}}\,\,\, L_{\rho}\left(x^{k+1},z,y^k\right),\label{exactz}\\
    y^{k+1}&=y^k+\rho\left(x^{k+1}-z^{k+1}\right).\label{updatea}
\end{align}
\end{subequations}
\begin{algorithm}[t]
	\caption{IPD} 
	\textbf{Initialization}: $x_i^0=z_i^0=y_i^0=0,w_i^0(0) \in \left(0, d^{-(2\phi+1)}_{\max}\right]$
	\begin{algorithmic}[1]
		\For {$k=0,1,\hdots$}
		\hide{\State Agent $i\in [m]$ is active with probability $p_i>0$.}
		\For {each active agent $i$}
		\State Compute $x_i^{k+1}$ using (\ref{updatex})
		\State Initialize $\xi_i^{k+1}(0)=x_i^{k+1}$
		\For {$b=0,1,\hdots,B-1$}
		\State Broadcast $w_i^k(b)$ and $\xi_i^{k+1}(b)$
		\State Compute $w_i^k(b+1)$ using (\ref{updatexi})
		\State Compute $\xi_i^{k+1}(b+1)$ using (\ref{updatez})
		\EndFor
		\State Set $w_i^{k+1}(0)=w_i^k(B)$ and $z_i^{k+1}=\xi_i^{k+1}(B)$
		\State Compute $y_i^{k+1}=y_i^k+\rho\left(x_i^{k+1}-z_i^{k+1}\right)$
		\EndFor
		\EndFor 
	\end{algorithmic} 
\end{algorithm}Step (\ref{exactx}) decomposes to local optimization subproblems at the agents. Solving these exactly is generally computationally burdensome, therefore \emph{inexact} minimization is invoked in the form of a single gradient descent step (with $\eta>0$) as:  
\begin{equation} \label{updatex}
    x_i^{k+1}=x_i^k-\eta\left(\nabla f_i\left(x_i^k\right)+y_i^k+\rho\left(x_i^k-z_i^k\right)\right).
\end{equation}
Problem (\ref{exactz}) is a quadratic program with closed form solution given by: 
\begin{gather*}
    z_i^{k+1}=\frac{1}{n}\sum\limits_{j=1}^n\left(x_j^{k+1}+\frac{y_j^k}{\rho}\right).
\end{gather*}
The challenge here is that this is a global averaging step that can not be computed in a distributed manner without (prohibitively) extensive message-passing. For this reason, we also opt to solve (\ref{exactz}) \emph{inexactly} by $B$ distributed averaging steps with weights obtained by the weight balancing method proposed in \cite{rokade2020distributed}.
The latter requires each agent to initialize its local weight $w_i^0\left(0\right) \in \left(0,d_{\max}^{-\left(2\phi+1\right)}\right]$ and update it (for $k\geq 0, b\in\{0,\hdots,B-1\}$) using:
\begin{equation}
        w_i^k(b+1)=\frac{1}{2}\left(w_i^k(b)+\frac{1}{d_i}\sum \limits_{j\in \mathcal{N}_i^{\text{in}}}w_j^k(b)\right),\label{updatexi}
\end{equation}
setting $w_i^{k+1}(0)=w_i^k(B)$.\\
We use $\xi_i^k\left(\cdot\right)$ as the proxy for $z_i^k$, which is initialized as $\xi_i^{k+1}(0)=x_i^{k+1}$ and updated ($b \in \{0,\hdots,B-1\}$) using:
\begin{equation}
    \xi_i^{k+1}(b+1)=\left(1-d_i w_i^k(b)\right)\xi_i^{k+1}(b)+\sum\limits_{j \in \mathcal{N}_i^{\text{in}}}w_j^k(b)\xi_j^{k+1}(b),\label{updatez}
\end{equation}
whence we let $z_i^{k+1}:=\xi_i^{k+1}(B)$.\\
These steps can be carried in a distributed fashion using information obtained from in-neighbors which, in turn, suggests that \emph{broadcasting} to out-neighbors suffices for communication. Our method is termed IPD (IPD: inexact, partial participation, directed graph) and is presented as Alg.~1. It supports partial participation (step 2); this is analyzed as random activation with probabilities $q_i$ (Thm.~2). Besides, local computation amounts to an economical single gradient step (step~3). Communication is carried by broadcasting $\xi_i$ and weight $w_i$ (step~6; total cost is $d+1$) which are updated by distributed 
averaging (steps 7-8), while step 11 is for dual ascent. In view of partial participation, an implicit assumption is that information received by broadcasting is stored in a buffer regardless of 
whether the agent is active or not and an active agent is available to carry all operations in steps 5-9.\\



\section{Convergence analysis}
\label{sec4}
The analysis is carried under the following two assumptions:
\newtheorem{assumption}{Assumption} 
\begin{assumption}\label{connected}
The directed graph $\mathcal{G}$ is strongly connected.
\end{assumption}
\begin{assumption}\label{aboutf}
The objective functions $f_i$ satisfy:
\begin{enumerate}
    \item Each $f_i, i\in [n]$ is strongly-convex, i.e., there exists $m_f>0$, such that \,$\forall x,y \in \mathbb{R}^n,\\\left(\nabla f_i\left(x\right)-\nabla f_i\left(y\right)\right)^T\left(x-y\right)\geq m_f \norm{x-y}^2$.
    \item The gradient of each $f_i, i\in [n]$ is Lipschitz continuous with constant $M_f>0$, i.e., $\forall x,y \in \mathbb{R}^n,\\\norm{\nabla f_i\left(x\right)-\nabla f_i\left(y\right)} \leq M_f\norm{x-y}$.
    \item The set of minimizers of $f$ is nonempty.
\end{enumerate}
\end{assumption}
We denote the primal-dual optimal solution of (\ref{prob2}) by $\left(x^\star,z^\star,y^\star\right)$; strong convexity guarantees unique primal solutions, while (\ref{KKTdual}) of the following KKT conditions guarantees uniqueness of the dual optimal solution:
\begin{subequations}
\begin{align}
x_1^{\star}=\cdots=x_n^{\star},\\
x^\star=z^\star,\\
\nabla F\left(x^{\star}\right)+y^{\star}=0.\label{KKTdual}
\end{align}
\end{subequations}
The following lemma establishes an error bound on the deviation of $z^k$ from the average $\bar{z}^k$ that is key in 
establishing our convergence theorem.
\newtheorem{lemma}{Lemma}
\begin{lemma}
Let $x^k_\perp:=x^k-\bar{x}^k,z^k_\perp:=z^k-\bar{z}^k$. Under Assumption \ref{connected}, there exist $\bar{k}>0,\delta \in \left(0,1\right)$, such that the sequence generated by Alg.~1 satisfies$$\norm{z^k_\perp}^2 \leq \delta^{2B-1} \norm{x^k_\perp}^2+\Delta^k\left(\norm{x^k-x^\star}^2+\norm{x^\star}^2\right),$$  $\forall k \geq \bar{k}$, and $\Delta^k \rightarrow 0$ geometrically with rate $\lambda_2(P)$.
\end{lemma}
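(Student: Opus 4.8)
The plan is to recast the $B$ inner averaging steps (\ref{updatez}) as a time-varying consensus recursion whose mixing matrix is driven by the weights (\ref{updatexi}), and then to bound the resulting deviation. Writing $W^k(b):=\text{diag}(w^k(b))$, the recursion (\ref{updatez}) reads $\xi^{k+1}(b+1)=\big(M^k(b)\otimes I_d\big)\xi^{k+1}(b)$ with $M^k(b):=I-(D-A)W^k(b)$, so that $z^{k+1}=(\Phi^k\otimes I_d)x^{k+1}$ for $\Phi^k:=M^k(B-1)\cdots M^k(0)$. A direct column-sum computation (using $\sum_i A_{ij}=d_j$) shows every $M^k(b)$, hence $\Phi^k$, is column stochastic; consequently $\mathcal J\,(\Phi^{k-1}\otimes I_d)=\mathcal J$ for $\mathcal J:=J\otimes I_d$, $J:=\tfrac1n\mathbf 1\mathbf 1^T$, which yields $\bar z^k=\bar x^k$ and the clean identity $z^k_\perp=\big((\Phi^{k-1}-J)\otimes I_d\big)x^k$. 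This is the object to be bounded.

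Next I would analyze the weights. The recursion (\ref{updatexi}) is $w^k(b+1)=\tilde P\,w^k(b)$ with $\tilde P:=(I+D^{-1}A)/2$, chained across rounds via $w^{k+1}(0)=w^k(B)$, so $w^{k-1}(b)=\tilde P^{(k-1)B+b}w^0(0)$. Since $P=D\tilde P D^{-1}$, the two matrices are similar and share their spectrum; strong connectivity (Assumption \ref{connected}) makes $\tilde P$ primitive, so $w^{k-1}(b)\to w^\star$ with $(D-A)w^\star=0$ at geometric rate $\lambda_2(P)$, while the specific initialization $w_i^0(0)\in(0,d_{\max}^{-(2\phi+1)}]$ keeps the iterates valid (positive weights, nonnegative $M^k(b)$) as guaranteed in \cite{rokade2020distributed}. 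At the balanced weight the matrix $M^\star:=I-(D-A)W^\star$ is \emph{doubly} stochastic, the new row-sum property holding precisely because $(D-A)w^\star=0$. I would then write $\Phi^{k-1}=(M^\star)^B+E^{k-1}$ and bound the perturbation by telescoping, using $\norm{M^k(b)-M^\star}\le\norm{D-A}\,\norm{w^{k-1}(b)-w^\star}$ together with uniform operator-norm bounds on the factors, to get $\norm{E^{k-1}}\le C\,\lambda_2(P)^{(k-1)B}$.

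To finish I would split the identity as $z^k_\perp=\big(((M^\star)^B-J)\otimes I_d\big)x^k_\perp+\big(E^{k-1}\otimes I_d\big)x^k$. The first term acts only on $x^k_\perp$: since $(M^\star)^B$ is doubly stochastic we have $((M^\star)^B-J)\bar x^k=0$ and $(M^\star)^B-J=(M^\star-J)^B$, whose spectral radius is $|\lambda_2(M^\star)|<1$; hence by Gelfand's formula there is $\delta\in(0,1)$ with $\norm{((M^\star)^B-J)x^k_\perp}\le\delta^{B}\norm{x^k_\perp}$. Applying $\norm{a+b}^2\le(1+\epsilon)\norm{a}^2+(1+\epsilon^{-1})\norm{b}^2$ with $\epsilon=(1-\delta)/\delta$ turns $(1+\epsilon)\delta^{2B}$ into $\delta^{2B-1}$, while the error term is absorbed by first invoking $\norm{x^k}^2\le 2\norm{x^k-x^\star}^2+2\norm{x^\star}^2$ (which explains the right-hand side structure) and then setting $\Delta^k:=2(1+\epsilon^{-1})\norm{E^{k-1}}^2=O\big(\lambda_2(P)^{2(k-1)B}\big)$, a geometrically vanishing quantity whose rate is governed by $\lambda_2(P)$. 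The threshold $\bar k$ is the iteration past which the weights have entered this geometric regime, so that the factors of $\Phi^{k-1}$ are uniformly norm-bounded and $M^\star$ is approached.

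The main obstacle I anticipate is the last ingredient of the second paragraph: propagating the $\lambda_2(P)$-rate of the scalar weight recursion through the \emph{product} of time-varying, non-symmetric matrices $M^k(b)$ to obtain the geometric bound on $\norm{E^{k-1}}$. This requires uniform control of $\norm{M^k(b)}_2$ (available since each factor is column stochastic with bounded row sums, whence $\norm{M^k(b)}_2\le\sqrt{\norm{M^k(b)}_1\norm{M^k(b)}_\infty}$ is bounded) and a telescoping estimate that keeps the accumulated perturbation linear in $\max_b\norm{w^{k-1}(b)-w^\star}$; moreover, the non-normality of $M^\star$ is exactly what forces the use of Gelfand's formula rather than a one-step norm contraction to extract a valid $\delta\in(0,1)$.
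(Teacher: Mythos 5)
Your overall strategy matches the paper's: exploit column stochasticity of $W^k(b):=I-(D-A)\,\mathrm{diag}(w^k(b))$ to get $\bar z^k=\bar x^k$, extract a geometric contraction of the disagreement over the $B$ inner steps, and charge the imperfect weight balancing to a correction term vanishing at rate $\lambda_2(P)$ (converted to $\delta^{2B-1}$ and to the $\norm{x^k-x^\star}^2+\norm{x^\star}^2$ structure by essentially the same Young/Jensen manipulations the paper uses). Where you diverge is the mechanism of the contraction. The paper works with the \emph{instantaneous} Perron vector $p^k(b)$ of each $W^k(b)$ (first property of \cite[Lemma 2]{rokade2020distributed}) and invokes the third property of that lemma to obtain a genuine one-step Euclidean contraction of the shifted disagreement $\widetilde{\xi}^k_\perp(b)+(\mathbf{1}/n-p^k(b))\mathbf{1}^T\widetilde{x}^k$ by a factor $\delta$ for all $k\ge\bar k$; iterating gives $\delta^B$ for every $B\ge 1$, and the correction term is driven by $\norm{\mathbf{1}/n-p^k}$. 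You instead freeze the mixing matrix at its doubly stochastic limit $M^\star$, write $\Phi^{k-1}=(M^\star)^B+E^{k-1}$, and bound $E^{k-1}$ by telescoping. That is a legitimate, arguably more self-contained alternative, but it shifts the whole burden onto the spectral step.

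And that step has a gap. Gelfand's formula gives $\norm{(M^\star-J)^B}^{1/B}\to r<1$ with $r$ the spectral radius, i.e., $\norm{(M^\star-J)^B}\le\delta^B$ only for $B$ sufficiently large (or $\le C\,\delta^B$ with a constant $C\ge 1$ valid for all $B$). Since $M^\star$ is not symmetric, $\norm{M^\star-J}_2$ may exceed $1$, so for $B=1$ --- which Alg.~1 and Thm.~1 explicitly permit via $B\ge\max\{1,\dots\}$, and which is the regime used in the experiments --- your claimed bound with a single $B$-independent $\delta\in(0,1)$ does not follow from what you wrote. To repair it you must either pass to an equivalent norm in which $M^\star-J$ contracts in one step and pay the norm-equivalence constant (which then contaminates $\delta^{2B-1}$ with a factor exceeding $1$, weakening the statement as written), or establish a one-step Euclidean contraction of a suitably shifted quantity --- which is precisely the content of the cited \cite[Lemma 2]{rokade2020distributed} that the paper leans on. You also need the spectral radius of $M^\star-J$ to be below $1$ at all, i.e., primitivity of $M^\star$; this hinges on $d_i w_i^\star<1$, which you correctly attribute to the initialization $w_i^0(0)\in\left(0,d_{\max}^{-(2\phi+1)}\right]$ but should be argued rather than assumed. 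The remaining ingredients (the column-sum computation, $(D-A)w^\star=0$, $(M^\star)^B-J=(M^\star-J)^B$, the Young-inequality bookkeeping, and the rate $\lambda_2(P)$ via similarity of the weight-update matrix and $P$) are all sound.
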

\begin{proof}
See Appendix.
\end{proof}
We proceed to analyze the convergence of Alg. 1 in two steps: a) Thm.~1 establishes linear convergence with full participation; b) Thm.~2 considers the case of partial participation and establishes the convergence with high probability.
\newtheorem{theorem}{Theorem}
\begin{theorem}[Full Participation]
Under Assumptions \ref{connected} and \ref{aboutf},  by choosing
$\eta\leq\frac{4}{15(M_f+m_f)}, \rho \in \left(0,\frac{4}{87}\frac{M_f m_f}{M_f+m_f}\right),$
\begin{footnotesize}
\centerline{$B\geq\max\left\{1,\frac{1}{2}\left((\ln \frac{5}{36}/\ln \delta)+1\right),\frac{1}{2}\left(\ln \frac{8M_fm_f}{9(M_f+m_f)^2}/\ln \delta+1\right)\right\},$}
\end{footnotesize}
if all agents are active in each iteration, the sequence generated by Alg.~1 satisfies\\
$$\norm{x^k-x^{\star}}^2+\norm{y^k-y^{\star}}^2=\mathcal{O}\left(\lambda^k\right),$$where\\
\centerline{$\lambda=\max\left\{\frac{2\mu_1}{1+\mu_1},\lambda_2(P)\right\},$}
    $\mu_1=\max\left\{\frac{c_1+c_2}{2c_2},1-\frac{2}{3}\rho c_3\right\},$\\
    $c_1:=\frac{1}{2\eta}+2\rho+3\rho\delta^{2B-1}-\frac{m_f M_f}{m_f+M_f},$\\
    $c_2:=\frac{1}{2\eta}-\delta^{2B-1}\left[\frac{5\rho}{4}+\frac{1}{\eta}+\frac{3(m_f+M_f)}{4}\right]-\rho,$\\
    $c_3:=\min\left\{\frac{1}{3(m_f+M_f)},\eta^2\left(\frac{3(m_f+M_f)}{16}-\rho\right),\frac{c_2-c_1}{\rho^2(4+4\delta^{2B-1})}\right\}.$
\end{theorem}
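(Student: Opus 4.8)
The plan is to build a joint primal--dual Lyapunov function $V^k:=\norm{x^k-x^\star}^2+\norm{y^k-y^\star}^2$ --- precisely the quantity in the statement --- and to prove a one-step inequality $V^{k+1}\le \mu_1 V^k+\epsilon^k$ in which $\epsilon^k$ is a perturbation that vanishes geometrically at rate $\lambda_2(P)$; the advertised rate then drops out of a perturbed-recursion argument. The fixed point of the iteration is identified through the KKT conditions: $x^\star=z^\star\in\mathcal C$ together with $\nabla F(x^\star)+y^\star=0$ (\ref{KKTdual}) make the gradient step (\ref{updatex}) and the dual step (\ref{updatea}) stationary at $(x^\star,z^\star,y^\star)$, so every recursion below is written for the \emph{errors} $x^k-x^\star$ and $y^k-y^\star$.

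First I would handle the primal descent. Subtracting the stationarity relation from (\ref{updatex}) gives $x^{k+1}-x^\star=(x^k-x^\star)-\eta[(\nabla F(x^k)-\nabla F(x^\star))+(y^k-y^\star)+\rho(x^k-z^k)]$, using $x^\star=z^\star$. Expanding $\norm{x^{k+1}-x^\star}^2$ and invoking the standard strongly-convex/smooth co-coercivity bound $\langle \nabla F(x^k)-\nabla F(x^\star),x^k-x^\star\rangle\ge \frac{m_f M_f}{m_f+M_f}\norm{x^k-x^\star}^2+\frac{1}{m_f+M_f}\norm{\nabla F(x^k)-\nabla F(x^\star)}^2$ --- which is exactly the source of the $\frac{m_f M_f}{m_f+M_f}$ term in $c_1$ --- while splitting the cross terms in $(y^k-y^\star)$ and in the residual $x^k-z^k$ by Young's inequality, produces a quadratic bound whose two governing coefficients are $c_1$ and $c_2$.

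Next, for the dual, I would rearrange (\ref{updatex}) into $\nabla F(x^k)+y^k=\eta^{-1}(x^k-x^{k+1})-\rho(x^k-z^k)$, so that the dual error $y^{k+1}-y^\star=(y^k-y^\star)+\rho(x^{k+1}-z^{k+1})$ can be expressed through primal increments and the residual; expanding $\norm{y^{k+1}-y^\star}^2$ and controlling its cross term via this gradient relation and (\ref{KKTdual}) yields the contraction factor $1-\frac{2}{3}\rho c_3$. The crux is that both recursions are driven by the truncated-averaging disagreement $z^k_\perp$, which is nonzero because the $z$-update runs only $B$ rounds. Here I would invoke the Lemma, substitute $\norm{z^k_\perp}^2\le \delta^{2B-1}\norm{x^k_\perp}^2+\Delta^k(\norm{x^k-x^\star}^2+\norm{x^\star}^2)$, and use $\norm{x^k_\perp}^2\le \norm{x^k-x^\star}^2$ (valid since $x^\star\in\mathcal C$ forces $x^\star_\perp=0$). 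This converts the disagreement into the $\delta^{2B-1}$ contributions inside $c_1,c_2$, while the third argument $\frac{c_2-c_1}{\rho^2(4+4\delta^{2B-1})}$ of $c_3$ is exactly the threshold that makes the coefficient of $\norm{z^k_\perp}^2$ absorbable.

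Finally, adding the primal and dual estimates yields $V^{k+1}\le \mu_1 V^k+\epsilon^k$, where the only genuinely additive piece of $\epsilon^k$ is the $\Delta^k\norm{x^\star}^2$ term, hence $\epsilon^k=\mathcal O(\lambda_2(P)^k)$ by the Lemma; the residual $\Delta^k\norm{x^k-x^\star}^2$ enters as a vanishing feedback on the $V^k$-coefficient, and absorbing it into the contraction is what inflates $\mu_1$ to $\frac{2\mu_1}{1+\mu_1}$, so that $V^k=\mathcal O(\lambda^k)$ with $\lambda=\max\{\frac{2\mu_1}{1+\mu_1},\lambda_2(P)\}$ follows by induction. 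I expect the main obstacle to be the bookkeeping in the first two steps: the Young's-inequality weights must be tuned so that the resulting quadratic form is simultaneously contractive in $(x,y)$ and has its $z^k_\perp$-coefficient dominated, and it is precisely this balance that forces the stated ranges $\eta\le\frac{4}{15(M_f+m_f)}$ and $\rho<\frac{4}{87}\frac{M_f m_f}{M_f+m_f}$ and the lower bound on $B$, which together guarantee $0<c_1<c_2$ and $c_3>0$, i.e.\ $\mu_1<1$.
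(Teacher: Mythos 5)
Your proposal follows essentially the same route as the paper: the same Lyapunov function $\norm{x^k-x^\star}^2+\norm{y^k-y^\star}^2$, the error recursion obtained by subtracting the KKT stationarity from the gradient and dual steps, cocoercivity for the $\frac{m_fM_f}{m_f+M_f}$ term, Young's inequality for the cross terms, Lemma~1 together with $\norm{x^k_\perp}\le\norm{x^k-x^\star}$ to absorb the consensus error, and a perturbed linear recursion $V^{k+1}\le\mu_1V^k+\omega^k$ with $\omega^k=\mathcal{O}(\lambda_2(P)^k)$ closed by a case analysis. The only small divergence is in attribution: in the paper the inflation of $\mu_1$ to $\frac{2\mu_1}{1+\mu_1}$ arises from the case split on whether the \emph{additive} term $\omega^k\propto(\Delta^k+\Delta^{k+1})\norm{x^\star}^2$ dominates $\frac{1-\mu_1}{2}V^{k+1}$, while the multiplicative feedback $\Delta^k\norm{x^k-x^\star}^2$ is simply absorbed for $k$ large since $\Delta^k\to0$ — but this does not affect the validity of your argument.
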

\begin{proof}
See Appendix.
\end{proof}
The following corollary shows that for appropriate parameter choice the convergence rate depends on a) the condition number $\kappa$ (with a dependency of $\frac{1}{\kappa}$ that is reminiscent of first order methods without acceleration) and b) the connectivity of the graph ($\lambda_2(P)$).
The lower bound on B increases logarithmically with the condition number and decreases logarithmically with the parameter $\delta$ (pertaining to the graph topology).
\newtheorem{corollary}{Corollary}
\begin{corollary}
Let $\eta=\frac{4}{15\left(M_f+m_f\right)}$, $\rho = \frac{2}{87}\frac{M_f m_f}{M_f+m_f}$, and $B> \frac{1}{2}+\frac{\ln (1000(\kappa +1))}{2 \ln 1/\delta}$, and define $\kappa:=\frac{M_f}{m_f}$. Then Alg.~1 with full participation converges linearly with rate 
$$\lambda=\max\left\{1-\mathcal{O}\left(\frac{1}{\kappa}\right), \lambda_2(P)\right\}.$$
\end{corollary}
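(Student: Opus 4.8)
The plan is to substitute the prescribed values of $\eta$, $\rho$ and $B$ directly into the rate expression of Thm.~1 and track every quantity's order in the condition number $\kappa=M_f/m_f$. First I would record the single structural consequence of the $B$-choice: since $B>\tfrac12+\tfrac{\ln(1000(\kappa+1))}{2\ln 1/\delta}$ is equivalent to $(2B-1)\ln(1/\delta)>\ln(1000(\kappa+1))$, it yields $\delta^{2B-1}<\tfrac{1}{1000(\kappa+1)}$. I would also check admissibility: $\eta=\tfrac{4}{15(M_f+m_f)}$ meets the bound of Thm.~1 with equality, $\rho=\tfrac{2}{87}\tfrac{M_fm_f}{M_f+m_f}$ is exactly half the upper endpoint of the admissible interval, and (since $\ln(1000(\kappa+1))$ exceeds both $|\ln\tfrac{8M_fm_f}{9(M_f+m_f)^2}|$ and $|\ln\tfrac{5}{36}|$ for all $\kappa\ge1$) this $B$ also clears the lower bound required by Thm.~1, so the theorem applies.

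Next I would reparametrize through $M_f=\kappa m_f$, which turns the building blocks into clean orders: $\tfrac{1}{2\eta}=\tfrac{15m_f(1+\kappa)}{8}=\Theta(\kappa m_f)$, while $\rho$ and $\tfrac{m_fM_f}{m_f+M_f}=\tfrac{\kappa m_f}{1+\kappa}$ are both $\Theta(m_f)$, and every term carrying a factor $\delta^{2B-1}$ is forced to be $O(m_f)$ because $\delta^{2B-1}$ multiplies quantities of order $\Theta(\kappa m_f)$ while $\delta^{2B-1}<\tfrac{1}{1000(\kappa+1)}$. Consequently $c_1$ and $c_2$ share the leading term $\tfrac{1}{2\eta}$, so both are $\Theta(\kappa m_f)$. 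The delicate quantity is the difference
\[
c_2-c_1=\frac{m_fM_f}{m_f+M_f}-3\rho-\delta^{2B-1}\!\left[\frac{17\rho}{4}+\frac1\eta+\frac{3(m_f+M_f)}{4}\right].
\]
Here $\tfrac{m_fM_f}{m_f+M_f}-3\rho=\tfrac{27}{29}\tfrac{m_fM_f}{m_f+M_f}>0$ by the particular ratio $3\rho=\tfrac{2}{29}\tfrac{m_fM_f}{m_f+M_f}$, whereas the $\delta^{2B-1}$-correction is bounded by a small constant times $m_f$ through the $\tfrac{1}{1000(\kappa+1)}$ estimate; hence $c_2-c_1$ is a positive quantity of order $\Theta(m_f)$ (in particular $c_2>c_1>0$).

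With these orders in hand both branches of $\mu_1$ collapse to $1-\Theta(1/\kappa)$. For the first, $\tfrac{c_1+c_2}{2c_2}=1-\tfrac{c_2-c_1}{2c_2}=1-\tfrac{\Theta(m_f)}{\Theta(\kappa m_f)}=1-\Theta(1/\kappa)$. For the second, I would identify the binding term in the minimum defining $c_3$: the candidates $\tfrac{1}{3(m_f+M_f)}$ and $\eta^2(\tfrac{3(m_f+M_f)}{16}-\rho)$ are each $\Theta(1/(\kappa m_f))$, whereas $\tfrac{c_2-c_1}{\rho^2(4+4\delta^{2B-1})}=\tfrac{\Theta(m_f)}{\Theta(m_f^2)}=\Theta(1/m_f)$ is a factor $\kappa$ larger and hence not the minimum for large $\kappa$; thus $c_3=\Theta(1/(\kappa m_f))$ and $\tfrac23\rho c_3=\Theta(1/\kappa)$. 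Taking the maximum of two numbers of the form $1-\Theta(1/\kappa)$ gives $\mu_1=1-\Theta(1/\kappa)$. Finally, since $g(\mu):=\tfrac{2\mu}{1+\mu}$ satisfies $1-g(\mu)=\tfrac{1-\mu}{1+\mu}$, we obtain $\tfrac{2\mu_1}{1+\mu_1}=1-\Theta(1-\mu_1)=1-\mathcal{O}(1/\kappa)$, whence $\lambda=\max\{\tfrac{2\mu_1}{1+\mu_1},\lambda_2(P)\}=\max\{1-\mathcal{O}(1/\kappa),\lambda_2(P)\}$.

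The main obstacle is not any single estimate but tracking the numerical constants tightly enough to certify, across the whole range of $\kappa$ rather than merely asymptotically, that (i) $c_2>c_1>0$ so the first branch stays below $1$, (ii) $\eta^2(\tfrac{3(m_f+M_f)}{16}-\rho)>0$ and which of the three terms realizes the minimum in $c_3$, and (iii) every $\delta^{2B-1}$-dependent correction is genuinely subdominant. The $B$-bound is engineered precisely for (iii): by pinning $\delta^{2B-1}\lesssim 1/\kappa$ it guarantees that the averaging-error corrections never disturb the $\Theta(m_f)$-versus-$\Theta(\kappa m_f)$ separation that drives the $1/\kappa$ rate.
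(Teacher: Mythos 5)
Your proposal is correct and follows exactly the route the paper intends (the corollary is stated without an explicit proof, being a direct instantiation of Thm.~1): you verify admissibility of $\eta$, $\rho$, $B$, translate the $B$-condition into $\delta^{2B-1}<\tfrac{1}{1000(\kappa+1)}$, and track that $c_2-c_1=\Theta(m_f)$ while $c_2=\Theta(\kappa m_f)$ and $\rho c_3=\Theta(1/\kappa)$, so both branches of $\mu_1$ and hence $\tfrac{2\mu_1}{1+\mu_1}$ equal $1-\mathcal{O}(1/\kappa)$. The constant-level checks you flag (positivity of $c_2-c_1$ via $3\rho=\tfrac{2}{29}\tfrac{m_fM_f}{m_f+M_f}$, and subdominance of the $\delta^{2B-1}$ corrections) all go through as you describe.
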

In the following theorem, parameters $\eta$ (stepsize), $\rho$ (penalty coefficient), $B$ (inner-loop rounds) are as in Thm.~1. Convergence with partial participation is established under a simple stochastic model that assumes agents are activated with probability $q_i>0$ at each iteration (step 2 of Alg. 1).
\begin{theorem}[Partial Participation]
Let Assumptions \ref{connected} and \ref{aboutf} and parameters $\eta$, $\rho$, $B$ as in Thm.~1. If at every round agent $i$ is active with probability $q_i$ (i.i.d. across rounds) with $q_{\min}:=\min_{i\in[n]}q_i>0$, then for any $\epsilon \in (0,1)$, with probability at least $1-\epsilon$, the following holds: $$\norm{v^k-v^\star}^2=\mathcal{O}\left(\lambda_p^k\right),$$ where $\lambda_p$ can be arbitrarily chosen from $\left(\lambda_1, 1\right)$,\\$\lambda_1=\max\left\{\lambda_2(P),1-q_{\min}\frac{1-\mu_1}{1+\mu_1}\right\}$, $v^k:=\left[\left(x^k\right)^T \left(y^k\right)^T\right]^T$ and $v^\star:=\left[\left(x^\star\right)^T \left(y^\star\right)^T\right]^T$.
\end{theorem}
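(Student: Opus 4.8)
The plan is to reduce the statement to a one-step \emph{expected} contraction of the Lyapunov function already used in the proof of Thm.~1, and then upgrade the resulting geometric decay in expectation into a trajectory-wise high-probability bound via a Markov-plus-union-bound argument. Write $v^k=[(x^k)^T\ (y^k)^T]^T$ and let $\Phi^k\ge 0$ denote the Lyapunov function from Thm.~1, which dominates $\norm{v^k-v^\star}^2$ and whose full-participation descent has the form $\Phi^{k+1}\le (1-\beta)\Phi^k+e^k$, where $\beta:=\frac{1-\mu_1}{1+\mu_1}$ (so that $1-\beta=\frac{2\mu_1}{1+\mu_1}$ is exactly the primal-dual factor of Thm.~1) and $e^k$ collects the consensus-error terms that, by Lemma~1, decay geometrically at rate $\lambda_2(P)$.

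First I would isolate the per-agent structure of this descent. Since $\norm{x^k-x^\star}^2+\norm{y^k-y^\star}^2=\sum_i(\norm{x_i^k-x_i^\star}^2+\norm{y_i^k-y_i^\star}^2)$ is separable, and an inactive agent leaves its own $(x_i,y_i)$ unchanged while still buffering its neighbors' broadcasts, I would write the partial-participation update as $v_i^{k+1}=v_i^k$ for $i\notin S^k$ and $v_i^{k+1}=(Tv^k)_i$ for $i\in S^k$, where $T$ is the full-participation map and $S^k$ is the random active set. Conditioning on the filtration $\mathcal{F}^k$ and using that the activations are i.i.d. with $\mathbb{P}(i\in S^k)=q_i$ independent of $\mathcal{F}^k$, the conditional expectation becomes $\mathbb{E}[\Phi^{k+1}\mid\mathcal{F}^k]=\Phi^k-\sum_i q_i\,g_i^k+\bar e^k$, where $g_i^k\ge 0$ is the descent contributed by agent $i$ and $\bar e^k$ is the (conditionally expected) error. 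Using $q_i\ge q_{\min}$ together with the aggregate full-participation descent $\sum_i g_i^k\ge \beta\,\Phi^k$ yields the key one-step bound $\mathbb{E}[\Phi^{k+1}\mid\mathcal{F}^k]\le (1-q_{\min}\beta)\Phi^k+\bar e^k$ with $\bar e^k=\mathcal{O}(\lambda_2(P)^k)$ up to factors controlled by Lemma~1. Setting $\lambda_1=\max\{\lambda_2(P),\,1-q_{\min}\beta\}$ and iterating the tower property gives $\mathbb{E}[\Phi^k]=\mathcal{O}(\lambda_p^k)$ for every $\lambda_p\in(\lambda_1,1)$, the strict inequality absorbing the possible polynomial-in-$k$ factor that appears when $\lambda_2(P)=1-q_{\min}\beta$.

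Finally I would convert the expectation bound into the high-probability statement. Fixing $\lambda_p\in(\lambda_1,1)$ and some $\tilde\lambda\in(\lambda_1,\lambda_p)$ so that $\mathbb{E}[\Phi^k]\le A\tilde\lambda^k$, Markov's inequality gives $\mathbb{P}(\Phi^k>M\lambda_p^k)\le \frac{A}{M}(\tilde\lambda/\lambda_p)^k$, which is summable in $k$ because $\tilde\lambda/\lambda_p<1$. Choosing $M=M(\epsilon)$ large enough that $\sum_{k\ge 0}\frac{A}{M}(\tilde\lambda/\lambda_p)^k\le\epsilon$ and applying a union bound over $k$, the event $\{\Phi^k\le M\lambda_p^k\ \forall k\}$ has probability at least $1-\epsilon$; on this event $\norm{v^k-v^\star}^2\le\Phi^k=\mathcal{O}(\lambda_p^k)$, which is precisely the claim.

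The main obstacle I anticipate is the second step: guaranteeing that the full-participation descent splits into \emph{nonnegative} per-agent contributions $g_i^k\ge 0$, which is exactly what converts $\sum_i q_i g_i^k\ge q_{\min}\sum_i g_i^k$ and produces the $q_{\min}\beta$ factor. Because each agent's update is coupled to the others through the averaged proxy $z_i^k$, a genuine per-coordinate contraction is not immediate, and I expect to need Lemma~1 to show that each $g_i^k$ is nonnegative only up to a decaying consensus-error term; some care is also needed to confirm that deactivating agents (which skip the inner averaging loop of steps~5--9 while still buffering broadcasts) does not invalidate Lemma~1's control of $\norm{z^k_\perp}^2$. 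Absorbing these residual errors into $\bar e^k$ without degrading the $q_{\min}\beta$ factor is the delicate part; by contrast, the concluding Markov-and-union-bound tail argument is routine.
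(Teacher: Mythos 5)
Your overall architecture (one-step contraction in conditional expectation, then Markov plus a union/Borel--Cantelli argument to pass to a trajectory-wise high-probability bound) is the same as the paper's, and your concluding tail argument is essentially identical to theirs. But the step you yourself flag as the ``delicate part'' is a genuine gap, and it is precisely the step where the paper does something different. You try to extract the $q_{\min}$ factor by decomposing the full-participation descent into \emph{nonnegative} per-agent contributions $g_i^k\ge 0$ so that $\sum_i q_i g_i^k\ge q_{\min}\sum_i g_i^k$. No such decomposition is available here: the contraction $\norm{Tv^k-v^\star}^2\le \frac{2\mu_1}{1+\mu_1}\norm{v^k-v^\star}^2$ established in Thm.~1 is an aggregate statement, and because every agent's update is coupled to the others through the averaging of $z$ and the dual variables, an individual agent's distance to its optimum can increase in a given step. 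The paper sidesteps this entirely by measuring progress in the weighted norm $\norm{\cdot}^2_{\Phi^{-1}}$, where $\Phi=\mathrm{blkdiag}(\Omega\otimes I_d,\Omega\otimes I_d)$ and $\Omega=\mathrm{diag}(q_1,\dots,q_n)$: writing $v^{k+1}=\Phi^k(Tv^k-v^\star)+(I-\Phi^k)(v^k-v^\star)+v^\star$ with $\Phi^k$ the $0$--$1$ activation matrix, the cross terms cancel under the $\Phi^{-1}$-inner product, the conditional expectation produces $\norm{Tv^k-v^\star}^2+\norm{v^k-v^\star}^2_{\Phi^{-1}-I}$, and the bound $\norm{v^k-v^\star}^2\ge q_{\min}\norm{v^k-v^\star}^2_{\Phi^{-1}}$ then yields the factor $1-q_{\min}\frac{1-\mu_1}{1+\mu_1}$ with no per-agent monotonicity required. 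This weighted-norm device is the missing idea in your proposal.

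A second, smaller gap: you note in passing that one must check that inactive agents do not invalidate Lemma~1's control of $\norm{z^k_\perp}^2$, but you do not resolve it, whereas roughly half of the paper's proof is devoted to exactly this point. Under partial participation the weight-balancing iterates obey $w^k(b+1)=w^k(b)+\Omega^k(Pw^k(b)-w^k(b))$, and the paper re-establishes their convergence to $w^\infty$ (again via the $\Omega^{-1}$-weighted norm together with \cite[Lemma 1]{makhdoumi2015graph}, obtaining the rate $1-q_{\min}\theta$), and then converts this into a high-probability event via Markov's inequality before the main recursion is run. You would need an analogous argument before you could invoke Lemma~1 at all in the partial-participation regime. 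Also note that the paper handles the additive error $\omega^k$ by splitting into the two cases inherited from (\ref{imp}) and applying a Young-type inequality with parameter $\beta=q_{\min}$ in the second case; your single recursion $\Phi^{k+1}\le(1-\beta)\Phi^k+e^k$ glosses over this, though that part is repairable once the weighted-norm contraction is in place.
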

\begin{proof}
See Appendix.
\end{proof}
\newtheorem{remark}{Remark}
\begin{remark}
Convergence in Thm. 2 is established with high probability, where the dependency on $\epsilon$ is hidden in $\mathcal{O}(\cdot)$, since the established rate analysis is asymptotic. In specific, for any $\epsilon \in \left(0,1\right)$, with
probability at least $1-\epsilon$, there exists some $K=K(\epsilon,\lambda_p)$,
such that $\forall k>K, \norm{v^{k}-v^\star}^2 \leq~\lambda_p^k$.
\end{remark}
\section{Comparison with push-diging}
\textbf{Push-DIGing} \cite{nedic2017achieving} is the most popular gradient-based method for directed graphs that achieves linear convergence through gradient tracking (second line in (\ref{s4})). The updates for agent $i$ at iteration $k$ are as follows:
\begin{align}
    u_i(k+1)=&\,\,c_{ii}(u_i(k)-\eta y_i(k))\notag\\&+\sum_{j \in \mathcal{N}_i^{\text{in}}}c_{ij}(k)(u_j(k)-\eta y_j(k)),\label{s1}\\
    v_i(k+1)=&\,\,c_{ii}(k)v_i(k)+\sum_{j \in \mathcal{N}_i^{\text{in}}}c_{ij}(k)v_j(k),\label{s2}\\
    x_i(k+1)=&\,\,u_i(k+1)/v_i(k+1),\label{s3}\\
    y_i(k+1)=&\,\,c_{ii}(k)y_i(k)+\sum_{j \in \mathcal{N}_i^{\text{in}}}c_{ij}(k)y_j(k)\notag\\&+\left\{\nabla f_i(x_i(k+1))-\nabla f_i(x_i(k))\right\}.\label{s4}
\end{align}
where $\eta$ is the stepsize, $C(k)$ is a column-stochastic matrix, and the initialization uses $v_i(0)=1$, $x_i(0)=u_i(0)$, and $y_i(0)=\nabla f_i(x_i(0))$.

We first show that our proposed solution has comparable  computation/communication cost (in fact lower communication cost for the case $B=1$):\\
$\bullet$ for Push-DIGing, in each round the cost for one agent includes one broadcast of $2d+1$, one gradient evaluation, plus a local averaging cost of $d_i^{\text{in}}(2d+1)$ where $d_i^{\text{in}}$ is the in-degree of agent~$i$.\\
$\bullet$ for our method, the cost includes one broadcast of $B(d+1)$, one gradient evaluation, and averaging cost of $d_i^{\text{in}}\left(B(d+1)\right)$.\\
We conclude that for $B=1$, IPD has lower computation and communication costs.

We proceed to compare our rate with the one established in \cite[Theorem 18]{nedic2017achieving}. For Push-DIGing, by denoting $V(k):=\text{diag}\left(v_1(k),\dots,v_n(k)\right)$, it holds that $\sup_k \norm{V(k)^{-1}}_{\max} =\mathcal{O}\left(n^n\right)$ \cite[Equation 49]{nedic2017achieving}, which results in an upper bound selection of stepsize as $\mathcal{O}\left(n^{-n}\right)$; this, in turn, leads to a rate of $1-\mathcal{O}\left(n^{-n}\right)$. This is reminiscent of the analysis technique which was developed to address a more general problem that also considers time-varying graphs. In contrast, the stepsize in our case depends only on the scaling and not on the population (see Thm.~1). We further study the rate experimentally in Fig.~\ref{fig2}, which 
depicts a substantial acceleration for the same stepsize. This is also translated to substantial computation and communication savings (since $B=1$ is used in all comparisons) . The superior rate achieved by our proposed method can further be explained by the weight balancing process in (\ref{updatexi}) which converges to a doubly stochastic matrix. In contrast, (\ref{s4}) in Push-DIGing uses a column-stochastic matrix (i.e., it does not apply the push-sum protocol in the gradient estimation step). 
\section{Experiments}
\label{sec5}
We evaluate IPD on a distributed logistic regression problem:$$f_i(x) := \frac{1}{m_i}\sum_{j=1}^{m_i} \left[\ln\left(1+e^{w_j^Tx}\right) +(1-y_j)w_j^Tx\right],$$ where $m_i$ is the number of data points held by each agent and $\{w_j,y_j\}_{j=1}^{m_i}\subset \mathbb{R}^d\times\{0,1\}$ are labeled samples. We used two datasets from LIBSVM\footnote{Available at https://www.csie.ntu.edu.tw/\textasciitilde cjlin/libsvm/.} and the UCI Machine Learning Repository\footnote{Available at https://archive.ics.uci.edu/ml/index.php.} for Fig.~1 and Fig.~2 respectively. In each case we take 5,000 data points with dimension $d=22$, distribute them uniformly at random across $n=50$ agents. The communication topology is captured by a directed graph obtained by randomly adding edges to the ring graph with probability 0.2. We use the relative cost error as the metric for convergence which is  defined as $\frac{\sum_{i=1}^n f(x_i^k)-f(x_i^\star)}{\sum_{i=1}^n f(x_i^0)-f(x_i^\star)}$.  Fig.~1.a shows the comparison with the alternative of solving the local optimization problem (\ref{exactx}) exactly as in \cite{rokade2020distributed}
: it reveals a large computational saving of 87.5\% in all cases. 
Fig.~1.b illustrates a negligible dependency on the number of communication steps $B$. In fact, all other experiments for both datasets were conducted for $B=1$.
Fig.~2.a shows the effect of increasing participation in the speedup of the algorithm (as expected from Thm.~2). Last but not least, we compared against Push-DIGing in Fig.~2.b for two different stepsizes (common for both methods). The superior convergence rate of IPD translates to 90.4\% computation and 94.9\% communication savings for target accuracy of 0.1.
\begin{figure}[t]
  \centering
  \begin{subfigure}[b]{0.49\linewidth}
    \includegraphics[width=\textwidth]{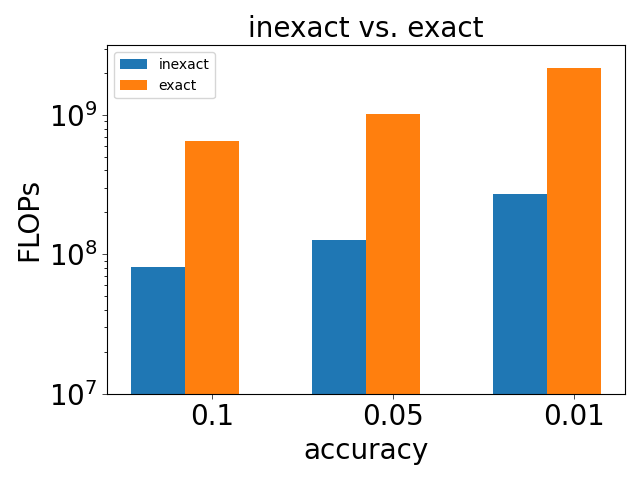}
    \caption{}
  \end{subfigure}
  \begin{subfigure}[b]{0.49\linewidth}
    \includegraphics[width=\textwidth]{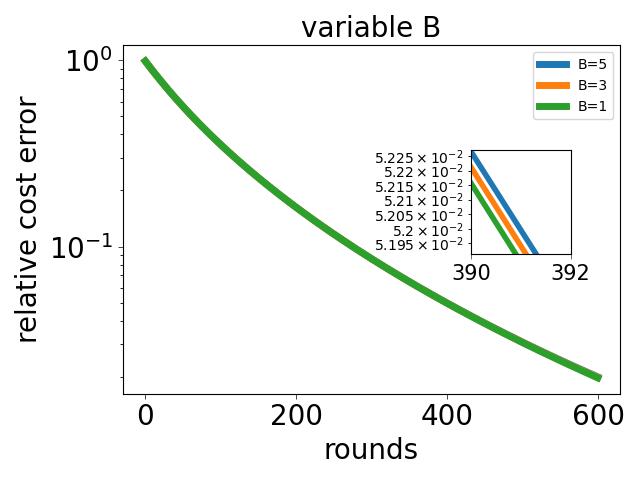}
    \caption{}
  \end{subfigure}\label{vs}
  \caption{Computation cost to reach a target accuracy compared with \cite{rokade2020distributed} (a) and convergence paths for variable $B$ (b) (Full participation is considered in both cases).}
  \label{fig1}
  \begin{subfigure}[b]{0.49\linewidth}
    \includegraphics[width=\textwidth]{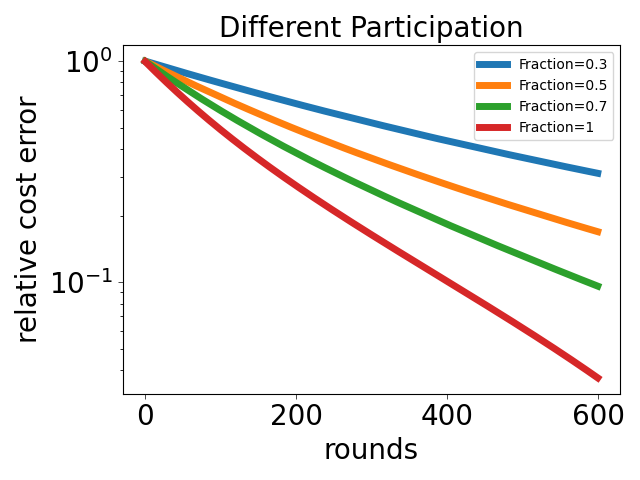}
    \caption{}
  \end{subfigure}
  \begin{subfigure}[b]{0.49\linewidth}
    \includegraphics[width=\textwidth]{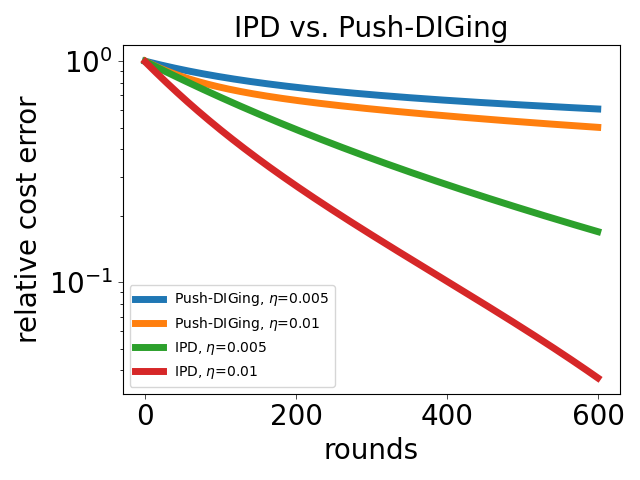}
    \caption{}
  \end{subfigure}
  \caption{Comparison on different levels of participation for $B=5$ (a) and comparison (full participation) with Push-DIGing (b).
}
  \label{fig2}
\end{figure}
\section{Conclusion}
This paper proposed proposed IPD, a primal-dual method for distributed optimization over directed graphs. The two primal subproblems are solved inexactly: one step of gradient descent for the local optimization problem ($x-$variable) and distributed averaging based on weight balancing ($z-$variable). IPD was shown both theoretically and experimentally to have faster convergence than Push-DIGing (Sec. V) as well as substantial computation and communication savings over baseline methods. The established linear rate gives a decomposition in terms of the problem conditioning and the network connectivity (Thm.~1 and Cor.~1), similar with first order methods on undirected graphs. Furthermore, the lower bound for $B$ (number of communications per round) was shown to have a logarithmic dependency with conditioning and connectivity (Cor.~1). A distinctive attribute of IPD is the feasibility of partial agent participation, which is crucial in large-scale real systems (the rate was established in Thm.~2).

\bibliographystyle{IEEE.bst}
\bibliography{ref.bib}

\begin{thebibliography}{10}

\bibitem{nedic2018distributed}
A.~Nedi{\'c} and J.~Liu,
\newblock ``Distributed optimization for control,''
\newblock {\em Annual Review of Control, Robotics, and Autonomous Systems},
  vol. 1, pp. 77--103, 2018.

\bibitem{dimakis2010gossip}
A.~Dimakis, S.~Kar, J.~Moura, M.~Rabbat, and A.~Scaglione,
\newblock ``Gossip algorithms for distributed signal processing,''
\newblock {\em Proceedings of the IEEE}, vol. 98, no. 11, pp. 1847--1864, 2010.

\bibitem{sopasakis2016accelerated}
P.~Sopasakis, N.~M. Freris, and P.~Patrinos,
\newblock ``Accelerated reconstruction of a compressively sampled data
  stream,''
\newblock in {\em {IEEE} European Signal Processing Conference (EUSIPCO)},
  2016, pp. 1078--1082.

\bibitem{verbraeken2020survey}
J.~Verbraeken, M.~Wolting, J.~Katzy, J.~Kloppenburg, T.~Verbelen, and J.~S.
  Rellermeyer,
\newblock ``A survey on distributed machine learning,''
\newblock {\em {ACM} computing surveys (csur)}, vol. 53, no. 2, pp. 1--33,
  2020.

\bibitem{vlachos2015compressive}
M.~Vlachos, N.~M. Freris, and A.~Kyrillidis,
\newblock ``Compressive mining: fast and optimal data mining in the compressed
  domain,''
\newblock {\em The VLDB Journal}, vol. 24, no. 1, pp. 1--24, 2015.

\bibitem{sensor}
N.~M. {Freris}, H.~{Kowshik}, and P.~R. {Kumar},
\newblock ``Fundamentals of large sensor networks: Connectivity, capacity,
  clocks, and computation,''
\newblock {\em Proceedings of the IEEE}, pp. 1828--1846, 2010.

\bibitem{intanagonwiwat2003directed}
C.~Intanagonwiwat, R.~Govindan, D.~Estrin, J.~Heidemann, and F.~Silva,
\newblock ``Directed diffusion for wireless sensor networking,''
\newblock {\em IEEE/ACM {T}ransactions on {N}etworking}, vol. 11, no. 1, pp.
  2--16, 2003.

\bibitem{subgrad}
A.~{Nedic} and A.~{Ozdaglar},
\newblock ``Distributed subgradient methods for multi-agent optimization,''
\newblock {\em IEEE Transactions on Automatic Control}, vol. 54, pp. 48--61,
  2009.

\bibitem{makhdoumi2017convergence}
A.~Makhdoumi and A.~Ozdaglar,
\newblock ``Convergence rate of distributed {ADMM} over networks,''
\newblock {\em IEEE Transactions on Automatic Control}, vol. 62, no. 10, pp.
  5082--5095, 2017.

\bibitem{chang2014multi}
T.~H. Chang, M.~Hong, and X.~Wang,
\newblock ``Multi-agent distributed optimization via inexact consensus
  {ADMM},''
\newblock {\em IEEE Transactions on Signal Processing}, vol. 63, no. 2, pp.
  482--497, 2014.

\bibitem{7942055}
N.~S. Aybat, Z.~Wang, T.~Lin, and S.~Ma,
\newblock ``Distributed linearized alternating direction method of multipliers
  for composite convex consensus optimization,''
\newblock {\em IEEE Transactions on Automatic Control}, vol. 63, no. 1, pp.
  5--20, 2018.

\bibitem{boyd2011distributed}
S.~Boyd, N.~Parikh, E.~Chu, B.~Peleato, and J.~Eckstein,
\newblock ``Distributed optimization and statistical learning via the
  alternating direction method of multipliers,''
\newblock {\em Foundations and Trends{\textregistered} in Machine Learning},
  vol. 3, no. 1, pp. 1--122, 2011.

\bibitem{nedic2014distributed}
A.~Nedi{\'c} and A.~Olshevsky,
\newblock ``Distributed optimization over time-varying directed graphs,''
\newblock {\em IEEE Transactions on Automatic Control}, vol. 60, no. 3, pp.
  601--615, 2014.

\bibitem{nedic2016stochastic}
A.~Nedi{\'c} and A.~Olshevsky,
\newblock ``Stochastic gradient-push for strongly convex functions on
  time-varying directed graphs,''
\newblock {\em IEEE Transactions on Automatic Control}, vol. 61, no. 12, pp.
  3936--3947, 2016.

\bibitem{nedic2017achieving}
A.~Nedi{\'c}, A.~Olshevsky, and W.~Shi,
\newblock ``Achieving geometric convergence for distributed optimization over
  time-varying graphs,''
\newblock {\em SIAM Journal on Optimization}, vol. 27, no. 4, pp. 2597--2633,
  2017.

\bibitem{rokade2020distributed}
K.~Rokade and R.~K. Kalaimani,
\newblock ``Distributed {ADMM} over directed networks,''
\newblock {\em arXiv e-prints, arXiv:2010.10421}, 2020.

\bibitem{jiang2021asynchronous}
W.~Jiang, A.~Grammenos, E.~Kalyvianaki, and T.~Charalambous,
\newblock ``An asynchronous approximate distributed alternating direction
  method of multipliers in digraphs,''
\newblock in {\em IEEE Conference on Decision and Control (CDC)}, 2021, pp.
  3406--3413.

\bibitem{khatana2020dc}
V.~Khatana and M.~V. Salapaka,
\newblock ``{DC-DistADMM: ADMM} algorithm for contsrained distributed
  optimization over directed graphs,''
\newblock {\em arXiv preprint, arXiv:2003.13742}, 2020.

\bibitem{freris2010fundamental}
N.~M. Freris, S.~R. Graham, and P.~R. Kumar,
\newblock ``Fundamental limits on synchronizing clocks over networks,''
\newblock {\em IEEE Transactions on Automatic Control}, vol. 56, no. 6, pp.
  1352--1364, 2010.

\bibitem{makhdoumi2015graph}
A.~Makhdoumi and A.~Ozdaglar,
\newblock ``Graph balancing for distributed subgradient methods over directed
  graphs,''
\newblock in {\em IEEE Conference on Decision and Control (CDC)}, 2015, pp.
  1364--1371.

\end{thebibliography}
\section*{Appendix}
\textbf{\emph{Proof of Lemma~1}}: For each $k$, we consider the $l-$th entry of each  $\xi_i^k\left(\cdot\right)$ and combine them to form an $n-$dimensional vector denoted by $\widetilde{\xi}^k\left(\cdot\right)$, and we let $$\widetilde{\xi}^k_\perp\left(\cdot\right):=\left(I-\textbf{11}^T/n\right)\widetilde{\xi}^k\left(\cdot\right).$$ We define $W^k(b):=I-(D-A)\text{diag}\left(w^k(b)\right)$; this is column stochastic ($1^TW^k(b) = 1^T$), which shows that $\xi^{k}(b+1)$ and $\xi^{k}(b)$ have the same sum  (hence also average) since the update of the $l-$th entry can be written as:$$\widetilde{\xi}^k\left(b+1\right)=W^k(b)\widetilde{\xi}^k\left(b\right).$$The same reasoning also shows that $\bar{x}^k=\bar{z}^k$. Then, according to the first property of \cite[Lemma 2]{rokade2020distributed}, there exists $p^k(b)$, so that $W^k(b)p^k(b)=p^k(b)$ and $(p^k(b))^T\textbf{1}=1$. For any $b\in\{0,\hdots,B-1\}$, it follows from (\ref{updatez}) that 
\begin{align*}
    &\left(I-p^k(b)\textbf{1}^T\right)\left(\widetilde{\xi}^k(b+1)\right)\\=&\left(I-p^k(b)\textbf{1}^T\right)W^k(b)\widetilde{\xi}^k(b)\\=&\left(W^k(b)-p^k(b)\textbf{1}^T\right)\left(I-p^k(b)\textbf{1}^T\right)\widetilde{\xi}^k(b).
\end{align*}
Additionally,
\begin{align*}
&\left(I-p^k(b)\textbf{1}^T+\textbf{11}^T/n-\textbf{11}^T/n\right)\widetilde{\xi}^k(b+1)\\=&\left(W^k(b)-p^k(b)\textbf{1}^T\right)\left(I-p^k(b)\textbf{1}^T\right.\\&\left.+\textbf{11}^T/n-\textbf{11}^T/n\right)\widetilde{\xi}^k(b),
\end{align*}
which implies
\begin{align*}
    &\widetilde{\xi}^k_\perp(b+1)+\left(\textbf{1}/n-p^k(b)\right)\textbf{1}^T\widetilde{\xi}^k(b+1)\\=&\left(W^k(b)-p^k(b)\textbf{1}^T\right)\left[\widetilde{\xi}^k_\perp(b)+(\textbf{1}/n-p^k(b))\textbf{1}^T\widetilde{\xi}^k(b)\right].
\end{align*}
Then from the third property of \cite[Lemma 2]{rokade2020distributed}, for all $k \geq \bar{k}$
\begin{footnotesize}
\begin{equation*}
    \norm{\widetilde{\xi}^k_\perp(b+1)+(\textbf{1}/n-p^k(b))\textbf{1}^T\widetilde{x}^k} \leq\delta \norm{\widetilde{\xi}^k_\perp(b)+(\textbf{1}/n-p^k(b))\textbf{1}^T\widetilde{x}^k}.
\end{equation*}
\end{footnotesize}
Iterating the inequality over $b=0$ to $b=B-1$, we obtain
\begin{footnotesize}
\begin{equation*}
    \norm{\widetilde{z}^k_\perp+(\textbf{1}/n-p^k(B))\textbf{1}^T\widetilde{x}^k} \leq \delta^B\norm{\widetilde{x}^k_\perp+(\textbf{1}/n-p^k(0))\textbf{1}^T\widetilde{x}^k}.
\end{equation*}
\end{footnotesize}
Letting $\hat{p}^k = \arg\max_{p \in \{p^k(0),p^k(B)\}} \|1/n-p\|$, we get
\begin{align*}
    \norm{\widetilde{z}^k_\perp} &\leq \delta^B\norm{\widetilde{x}^k_\perp}+\left(1+\delta^B\right)\norm{(\textbf{1}/n-\hat{p}^k)\textbf{1}^T\widetilde{x}^k}\\&\leq \delta^B\norm{\widetilde{x}^k_\perp}+2\norm{(\textbf{1}/n-\hat{p}^k)\textbf{1}^T\widetilde{x}^k},
\end{align*}
which implies (using $\delta \in (0,1), 
(a+b)^2 \le \frac{1}{\delta} a^2 + \frac{1}{1-\delta}b^2$ for any $a,b\in\mathbb{R}$, 
along with Jensen's inequality): 
\begin{align*}
    &\norm{\widetilde{z}^k_\perp}^2 \leq\delta^{2B-1}\norm{\widetilde{x}^k_\perp}^2+\frac{4}{1-\delta}\norm{(\textbf{1}/n-\hat{p}^k)\textbf{1}^T\widetilde{x}^k}^2\\
    =&\delta^{2B-1}\norm{\widetilde{x}^k_\perp}^2\\&+\frac{4}{1-\delta}\norm{(\textbf{1}/n-\hat{p}^k)\left(\textbf{1}^T\widetilde{x}^k-\textbf{1}^T\widetilde{x}^\star+\textbf{1}^T\widetilde{x}^\star\right)}^2\\
    =&\delta^{2B-1}\norm{\widetilde{x}^k_\perp}^2\\&+\frac{4}{1-\delta}\norm{\textbf{1}/n-\hat{p}^k}^2\left(\textbf{1}^T\widetilde{x}^k-\textbf{1}^T\widetilde{x}^\star+\textbf{1}^T\widetilde{x}^\star\right)^2\\
    \leq&\delta^{2B-1}\norm{\widetilde{x}^k_\perp}^2\\&+\frac{8}{1-\delta}\norm{\textbf{1}/n-\hat{p}^k}^2\left(\left(\textbf{1}^T\widetilde{x}^k-\textbf{1}^T\widetilde{x}^\star\right)^2+\left(\textbf{1}^T\widetilde{x}^\star\right)^2\right)\\
    \leq&\delta^{2B-1}\norm{\widetilde{x}^k_\perp}^2\\&+\frac{8}{1-\delta}\norm{\textbf{1}/n-\hat{p}^k}^2\left(n\norm{\widetilde{x}^k-\widetilde{x}^\star}^2+n\norm{\widetilde{x}^\star}^2\right)
\end{align*}
Since $l$ is arbitrarily chosen from $\left\{1, \dots, d\right\}$, the inequality above holds for each entry position. Adding the $d$ inequalities together and defining $\Delta^k:=\frac{8n}{1-\delta}\norm{\textbf{1}/n-\hat{p}^k}^2$, which in view of the fact that both $\norm{\textbf{1}/n-p^k(0)}^2$ and $\norm{\textbf{1}/n-p^k(B)}^2$ tend to zero geometrically with the rate to be $\lambda_2(P)$ \cite{makhdoumi2015graph}, completes the proof.$\hfill\blacksquare$\\

\textbf{\emph{Proof of Theorem~1}}: For notational simplification, we denote $\nabla F\left(x^k\right)$ by $\nabla F^k$ and $\nabla F\left(x^\star\right)$ by $\nabla F^\star$. From (\ref{updatex}) and (\ref{KKTdual}) we have $$\nabla F^k-\nabla F^\star=-\frac{1}{\eta}(x^{k+1}-x^k)-(y^k-y^\star)-\rho(x^k-z^k)$$ and
\begin{align*}
&(x^k-x^\star)^T(\nabla F^k-\nabla F^\star)=\\&\underbrace{(x^{k+1}-x^\star)^T(\nabla F^k-\nabla F^\star)}_{(i)}+(x^k-x^{k+1})^T(\nabla F^k-\nabla F^\star)
\end{align*}
$(i)=\underbrace{-\frac{1}{\eta}(x^{k+1}-x^\star)^T(x^{k+1}-x^k)}_{(ii)}\\\underbrace{-(x^{k+1}-x^\star)^T(y^k-y^\star)}_{(iii)}\underbrace{-\rho(x^{k+1}-x^\star)^T(x^k-z^k)}_{(iv)}$.\\ 
The following is to bound the three terms.$$(ii)=\frac{1}{2\eta}\left(\norm{x^k-x^\star}^2-\norm{x^{k+1}-x^\star}^2-\norm{x^{k+1}-x^k}^2\right),$$where we use parallelogram law $$-2(a-b)^T(a-c)=\norm{b-c}^2-\norm{a-b}^2-\norm{a-c}^2,$$ which is also used several times in the sequel.
\begin{align*}
    &(iii)=-(x^{k+1}-z^{k+1}+z^{k+1}-x^\star)^T(y^k-y^\star)\\=&-(x^{k+1}-z^{k+1})^T(y^k-y^\star)-(z^{k+1}-x^\star)^T(y^k-y^\star)\\\underset{(\ref{updatea})}{=}&-\frac{1}{\rho}(y^{k+1}-y^k)^T(y^k-y^\star)-(z^{k+1}_\perp)^T(y^k-y^\star)\\=&-\frac{1}{2\rho}\left(\norm{y^{k+1}-y^\star}^2-\norm{y^k-y^{k+1}}^2-\norm{y^k-y^\star}^2\right)\\&-(z^{k+1}_\perp)^T(y^k-y^\star),
\end{align*}
where we use $z^{k+1}=\bar{x}^{k+1}+z^{k+1}_\perp$  and $\sum_{i=1}^n y_i^k=\sum_{i=1}^n y_i^\star=0$ from (\ref{KKTdual}).
\begin{align*}
    &(iv)=-\rho(x^{k+1}-x^\star)^T(x^k-x^{k+1}+x^{k+1}-z^k)\\=&-\rho(x^{k+1}-x^\star)^T(x^{k+1}-z^k)-\rho(x^{k+1}-x^\star)^T(x^k-x^{k+1})\\=&\frac{\rho}{2}\left(\norm{x^\star-z^k}^2-\norm{x^{k+1}-x^\star}^2-\norm{x^{k+1}-z^k}^2\right)\\&-\frac{\rho}{2}\left(\norm{x^\star-x^k}^2-\norm{x^{k+1}-x^\star}^2-\norm{x^k-x^{k+1}}^2\right)\\=&\frac{\rho}{2}\left(\norm{x^\star-z^k}^2+\norm{x^k-x^{k+1}}^2-\norm{x^\star-x^k}^2\right.\\&\left.-\norm{x^{k+1}-z^k}^2\right).
\end{align*}
 Invoking Lemma 1 and using $\delta\in (0,1)$, we obtain:
\begin{align*}
    &\norm{x^\star-z^k}^2-\norm{x^\star-x^k}^2=\norm{z^k}^2-\norm{x^k}^2\\=&\norm{\bar{x}^k+z^k_\perp}^2-\norm{\bar{x}^k+x^k_\perp}^2\\=&\norm{\bar{x}^k}^2+\norm{z^k_\perp}^2+2\bar{x}^{kT}z^k_\perp-\norm{\bar{x}^k}^2-\norm{x^k_\perp}^2-2\bar{x}^{kT}x^k_\perp\\=&\norm{z^k_\perp}^2-\norm{x^k_\perp}^2+2\bar{x}^{kT}\left(z^k_\perp-x^k_\perp\right)\\=&\norm{z^k_\perp}^2-\norm{x^k_\perp}^2\\\leq& \left(\delta^{2B-1}-1\right)\norm{x^k_\perp}^2+\Delta^k \left(\norm{x^k-x^\star}^2+\norm{x^\star}^2\right) \\\leq &\Delta^k \left(\norm{x^k-x^\star}^2+\norm{x^\star}^2\right).
\end{align*}
 Adding the three terms together gives:
 \begin{align*}
     &(i) \leq \frac{1}{2\eta}\left(\norm{x^k-x^\star}^2-\norm{x^{k+1}-x^\star}^2\right)\\&+\frac{1}{2\rho}\left(\norm{y^k-y^\star}^2-\norm{y^{k+1}-y^\star}^2\right)\\&+(\frac{\rho}{2}-\frac{1}{2\eta})\norm{x^{k+1}-x^k}^2+\frac{\rho}{2}\Delta^k\left(\norm{x^k-x^\star}^2+\norm{x^\star}^2\right)\\&\underbrace{+\frac{1}{2\rho}\norm{y^k-y^{k+1}}^2-\frac{\rho}{2}\norm{x^{k+1}-z^k}^2}_{(v)}\underbrace{\vphantom{\frac{1}{2\rho}}-(z^{k+1}_\perp)^T(y^k-y^\star)}_{(vi)}.
 \end{align*}
 We proceed to bound $(v)$ and $(vi)$.\\From (\ref{updatea}), $y^k-y^{k+1}=-\rho\left(x^{k+1}-z^{k+1}\right)$, so
 \begin{align*}
     &(v)=\frac{\rho}{2}\left(\norm{x^{k+1}-z^{k+1}}^2-\norm{x^{k+1}-z^k}^2\right)\\=&\frac{\rho}{2}\left(\norm{x^{k+1}-\bar{x}^{k+1}-z^{k+1}_\perp}^2-\norm{x^{k+1}-\bar{x}^{k}-z^{k}_\perp}^2\right)\\=&\frac{\rho}{2}\left[\norm{x^{k+1}-\bar{x}^{k+1}}^2+\norm{z^{k+1}_\perp}^2-\norm{x^{k+1}-\bar{x}^{k}}^2-\norm{z^{k}_\perp}^2\right.\\&\left.-2(z^{k+1}_\perp)^T(x^{k+1}-\bar{x}^{k+1})+2(z^{k}_\perp)^T(x^{k+1}-\bar{x}^{k})\right] \\\leq&\frac{\rho}{2}\left[\norm{z^{k+1}_\perp}^2-2(z^{k+1}_\perp)^T(x^{k+1}-\bar{x}^{k+1})\right.\\&\left.+2(z^{k}_\perp)^T(x^{k+1}-\bar{x}^{k+1}+\bar{x}^{k+1}-\bar{x}^{k})\right]\\\leq& \frac{\rho}{2}\left[\norm{z^{k+1}_\perp}^2+\norm{z^{k+1}_\perp}^2+\norm{x^{k+1}_\perp}^2+2\norm{z^{k}_\perp}^2\right.\\&\left.+\norm{\bar{x}^{k+1}-\bar{x}^{k}}^2+\norm{x^{k+1}_\perp}^2\right]\\\leq&\frac{\rho}{2}\left\{(2\delta^{2B-1}+2)\norm{x^{k+1}_\perp}^2+2\delta^{2B-1}\norm{x^k_\perp}^2\right.\\&\left.+\norm{\bar{x}^{k+1}-\bar{x}^k}^2+2\Delta^k(\norm{x^k-x^\star}^2+\norm{x^\star}^2)\right.\\&\left.+2\Delta^{k+1}(\norm{x^{k+1}-x^\star}^2+\norm{x^\star}^2)\right\}.
 \end{align*}
From (\ref{updatex}) and (\ref{KKTdual}),
$$y^k=-\frac{1}{\eta}(x^{k+1}-x^k)-\nabla F^k-\rho(x^k-z^k),$$
\begin{align*}
    &(vi)=-(z^{k+1}_\perp)^T\left[-\frac{1}{\eta}\left(x^{k+1}-x^k\right)-\left(\nabla F^k-\nabla F^\star\right)\right.\\&\left.\qquad \,\,\,\,-\rho \left(x^k-z^k\right)\right]\\\leq& \frac{\gamma}{4\eta}\left(\delta^{2B-1} \norm{x^{k+1}_\perp}^2+\Delta^{k+1}\left(\norm{x^{k+1}-x^\star}^2+\norm{x^\star}^2\right)\right)\\+&\frac{1}{\gamma \eta}\norm{x^{k+1}-x^k}^2+\frac{1}{\zeta}\norm{\nabla F^\star-\nabla F^k}^2+\rho\norm{x^k-z^k}^2\\+&\frac{\zeta}{4}\left(\delta^{2B-1} \norm{x^{k+1}_\perp}^2+\Delta^{k+1}\left(\norm{x^{k+1}-x^\star}^2+\norm{x^\star}^2\right)\right)\\+&\frac{\rho}{4}\left(\delta^{2B-1} \norm{x^{k+1}_\perp}^2+\Delta^{k+1}\left(\norm{x^{k+1}-x^\star}^2+\norm{x^\star}^2\right)\right),
\end{align*}
for arbitrary $\gamma,\zeta>0$. Note that
$\norm{\bar{x}^{k+1}-\bar{x}^k}^2 \leq \norm{x^{k+1}-x^k}^2$, and
\begin{align*}
    &\norm{x^k-z^k}^2=\norm{x^k_\perp-z^k_\perp}^2 \leq 2\norm{x^k_\perp}^2+2\norm{z^k_\perp}^2\\\leq&(2+2\delta^{2B-1})\norm{x^k_\perp}^2+2\Delta^k\left(\norm{x^k-x^\star}^2+\norm{x^\star}^2\right).
\end{align*}
For arbitrary $\tau>0$,
\begin{footnotesize}
\begin{align*}
    (\nabla F^k-\nabla F^\star)^T(x^k-x^{k+1})\leq \tau\norm{x^k-x^{k+1}}^2+\frac{1}{4\tau}\norm{\nabla F^k-\nabla F^\star}^2.
\end{align*}
\end{footnotesize}
Assumption \ref{aboutf} implies (cocoercivity of the gradient):
\begin{align*}
    &\left(x^k-x^{\star}\right)^T\left(\nabla F^k-\nabla F^{\star}\right) \\\geq& \frac{m_f M_f}{m_f+M_f}\norm{x^k-x^{\star}}^2+\frac{1}{m_f+M_f}\norm{\nabla F^k-\nabla F^{\star}}^2.
\end{align*}
Putting everything together we obtain that for some $\widetilde{k}$ and all $k \geq \widetilde{k}$:
\begin{align*}
    0 \leq& \left(\frac{1}{4\tau}+\frac{1}{\zeta}-\frac{1}{m_f+M_f}\right)\norm{\nabla F^\star-\nabla F^k}^2\\+&\frac{1}{2\rho}\norm{y^k-y^\star}^2+\left(\tau+\rho+\frac{1}{\gamma \eta}-\frac{1}{2\eta}\right)\norm{x^k-x^{k+1}}^2\\-&\frac{1}{2\rho}\norm{y^{k+1}-y^\star}^2\\+&\left\{\frac{1}{2\eta}+2\rho+3\rho\delta^{2B-1}-\frac{m_f M_f}{m_f+M_f}\right\}\norm{x^k-x^\star}^2\\-&\left\{\frac{1}{2\eta}-\delta^{2B-1}\left[\frac{5\rho}{4}+\frac{\gamma}{4\eta}+\frac{\zeta}{4}\right]-\rho\right\}\norm{x^{k+1}-x^\star}^2\\+&\frac{7\rho}{2}\Delta^k\left(\norm{x^k-x^\star}^2+\norm{x^\star}^2\right)\\+&\left(\frac{\gamma}{4\eta}+\frac{\zeta}{4}+\frac{5\rho}{4}\right)\Delta^{k+1}\left(\norm{x^{k+1}-x^\star}^2+\norm{x^\star}^2\right).
\end{align*}
By applying Jensen's inequality and (\ref{updatex}) along with the previously obtained bound on $\|x^k-z^k\|$,
\begin{align*}
    \frac{1}{3}\norm{y^k-y^\star}^2 \leq& \frac{1}{\eta^2}\norm{x^{k+1}-x^k}^2+\norm{\nabla F^\star-\nabla F^k}^2\\&+\rho^2(2+2\delta^{2B-1})\norm{x^k-x^\star}^2\\&+2\rho^2\Delta^k\left(\norm{x^k-x^\star}^2+\norm{x^\star}^2\right).
\end{align*}
It thus all boils down to choosing  suitable $B,\eta,\tau,\zeta$, such that the following inequalities hold:
\begin{subequations}
\begin{gather}
    \frac{1}{4\tau}+\frac{1}{\zeta}-\frac{1}{m_f+M_f} <0,\\
    \tau+\rho+\frac{1}{\gamma\eta}-\frac{1}{2\eta} <0,\\
    \frac{1}{2\eta}-\left[\delta^{2B-1}\left(\frac{5\rho}{4}+\frac{\gamma}{4\eta}+\frac{\zeta}{4}\right)+\rho\right]>0,\\
    \delta^{2B-1}\left(\frac{17\rho}{4}+\frac{\gamma}{4\eta}+\frac{\zeta}{4}\right)+3\rho-\frac{m_f M_f}{m_f+M_f}<0.
\end{gather}
\end{subequations}
By choosing $\gamma=4, \tau=\frac{3(M_f+m_f)}{4},\eta\leq\frac{4}{15(M_f+m_f)},\zeta=3(M_f+m_f)$, there exist  $\mu_1,\mu_2,\mu_3,k_1$, such that $0<\mu_1<\mu_2$ and for all $k>k_1$, the following holds:
\begin{align*}
    &\mu_1\left(\norm{x^k-x^\star}^2+\norm{y^k-y^\star}^2\right)+\mu_3\left(\Delta^k+\Delta^{k+1}\right)\norm{x^\star}^2\\&\geq \norm{x^{k+1}-x^\star}^2+\norm{y^{k+1}-y^\star}^2
\end{align*}
where $\mu_1=\max\left\{\frac{c_1+c_2}{2c_2},\left(\frac{1}{2\rho}-c_3\right)/\left(\frac{1}{2\rho}\right)\right\}$ and $c_1,c_2,c_3$ are specified in the statement of the theorem.\\Denoting $\omega^k:=\mu_3\left(\Delta^k+\Delta^{k+1}\right)\norm{x^\star}^2$, we have established
\begin{align*}
    &\mu_1\left(\norm{x^k-x^\star}^2+\norm{y^k-y^\star}^2\right)+\omega^k\\\geq&\norm{x^{k+1}-x^\star}^2+\norm{y^{k+1}-y^\star}^2,
\end{align*}
and $\omega^k$ tends to zero geometrically with rate $\lambda_2(P)$. We consider two cases:\\
$\bullet$ if there exists a subsequence $k_{(l)}$ such that
\begin{equation}
    \frac{1-\mu_1}{2}\left(\norm{x^{k_{(l)}+1}-x^\star}^2+\norm{y^{k_{(l)}+1}-y^\star}^2\right)<\omega^{k_{(l)}},\label{imp}
\end{equation}
then the subsequence $\norm{x^{k_{(l)}+1}-x^\star}^2+\norm{y^{k_{(l)}+1}-y^\star}^2 \rightarrow 0$ geometrically with rate $\lambda_2(P)$ and hence also $\lambda$. For $k$ that does not satisfy (\ref{imp}), we have
\begin{align}
    &\mu_1\left(\norm{x^k-x^\star}^2+\norm{y^k-y^\star}^2\right) \notag\\\geq&\frac{1+\mu_1}{2}\left(\norm{x^{k+1}-x^\star}^2+\norm{y^{k+1}-y^\star}^2\right),\label{14}
\end{align}
so again this shows asymptotic rate $\lambda$.\\
$\bullet$ if there exists some $k_2$ so that for all $k \geq k_2$, the reverse of (\ref{imp}) holds, then
 for all $k$, (\ref{14}) holds.\\
We conclude that in both cases, $\norm{x^k-x^\star}^2+\norm{y^k-y^\star}^2 \rightarrow 0$ geometrically with rate $\lambda$.$\hfill\blacksquare$\\

\textbf{\emph{Proof of Theorem~2}}: 
Let $\Omega^k \in \mathbb{R}^{n \times n}$ be a $0-1$ diagonal matrix where $1$ corresponds to the case agent $i$ is active and $0$ when it is inactive. Let $\mathbb{E}\left[\Omega^k\right]=\Omega:=\text{diag}(q_1,\hdots,q_n)$.\\The update of $w$ can be written compactly as $$w^k(b+1)=w^k(b)+\Omega^k\left(Pw^k(b)-w^k(b)\right).$$Let $w^\infty=\underset{t \rightarrow \infty}{\text{lim}}P^tw^0(0)$, then we have
\begin{align*}
    &\norm{w^k(b+1)-w^\infty}^2_{\Omega^{-1}}\\=&\norm{w^k(b)+\Omega^k\left(Pw^k(b)-w^k(b)\right)-w^\infty}^2_{\Omega^{-1}}\\=&\norm{w^k(b)-w^\infty}^2_{\Omega^{-1}}\\&+\left(Pw^k(b)-w^k(b)\right)^T\Omega^k\Omega^{-1}\Omega^k\left(Pw^k(b)-w^k(b)\right)\\&+2\left(w^k(b)-w^\infty\right)^T\Omega^{-1}\left(\Omega^k\left(Pw^k(b)-w^k(b)\right)\right)\\=&\norm{w^k(b)-w^\infty}^2_{\Omega^{-1}}\\&+\left(Pw^k(b)-w^k(b)\right)^T\Omega^{-1}\Omega^k\left(Pw^k(b)-w^k(b)\right)\\&+2\left(w^k(b)-w^\infty\right)^T\Omega^{-1}\left(\Omega^k\left(Pw^k(b)-w^k(b)\right)\right),
\end{align*}
 where we have used idempotency of $\Omega^k$ (as 0-1 diagonal matrix) and the fact that diagonal matrices commute. By taking conditional expectation (denoted by $\mathbb{E}^k$, where conditioning is on past activations) on both sides, we have 
 \begin{align*}
     &\mathbb{E}^k\left[\norm{w^k(b+1)-w^\infty}^2_{\Omega^{-1}}\right]\\=&\norm{w^k(b)-w^\infty}^2_{\Omega^{-1}}+\norm{Pw^k(b)-w^k(b)}^2\\&+2\left(w^k(b)-w^\infty\right)^T\left(Pw^k(b)-w^k(b)\right).
 \end{align*}
  From \cite[Lemma 1]{makhdoumi2015graph}, there exists some positive $\theta<1$ only depending on $P$, such that
  \begin{align*}
      &\norm{Pw^k(b)-w^k(b)}^2+2\left(w^k(b)-w^\infty\right)^T\left(Pw^k(b)-w^k(b)\right)\\&\leq -\theta\norm{w^k(b)-w^\infty}^2,
  \end{align*}
  which implies
  \begin{footnotesize}
\begin{align*}
    \mathbb{E}^k\left[\norm{w^k(b+1)-w^\infty}^2_{\Omega^{-1}}\right] \leq\left(1-q_{\text{min}}\theta\right) \norm{w^k(b)-w^\infty}^2_{\Omega^{-1}}.
\end{align*}
\end{footnotesize}
   By induction, we obtain $$\mathbb{E}\left[\norm{w^k(b)-w^\infty}^2\right]\leq c_1 \lambda^{kB+b}_2$$for some constant $c_1$ and $\lambda_2:=1-q_{\text{min}}\theta$,  whence invoking 
 Markov's inequality for arbitrary $\epsilon_1 >0$ gives $$\text{Pr}\left[\norm{w^k(b)-w^\infty}^2 \geq \epsilon_1\right] \leq \frac{1}{\epsilon_1}c_1\lambda_2^{kB+b}.$$ This further implies $$\underset{n \rightarrow \infty}{\text{lim}}\text{Pr}\left[\cup^\infty_{k=n}\cup^{B-1}_{b=0}\left\{\norm{w^k(b)-w^\infty}^2>\epsilon_1\right\}\right]=0,$$ i.e., $\forall \epsilon>0$, $\exists K_1(\epsilon)$, such that $$\text{Pr}\left[\cup^\infty_{k=K_1}\cup^{B-1}_{b=0}\left\{\norm{w^k(b)-w^\infty}^2>\epsilon_1\right\}\right]\leq \frac{\epsilon}{2},$$ which implies
 \begin{align}
    \text{Pr}\left[\cap^\infty_{k=K_1}\cap^{B-1}_{b=0}\left\{\norm{w^k(b)-w^\infty}^2<\epsilon_1\right\}\right]\geq 1-\frac{\epsilon}{2}.\label{possibility}
 \end{align}
 We define $v^k:=\left[(x^k)^T,(y^k)^T\right]^T$, and let $T_a$ to be the operator corresponding to Alg.~1 for full participation, i.e., $T_a: (x^k,y^k,z^k) \mapsto (x^{k+1},y^{k+1},z^{k
+1})$ and $T:=\begin{bmatrix}I_d\quad 0\quad 0\\0 \,\quad I_d \quad 0\end{bmatrix}T_a$.\\We define $\Phi^k:=\text{blkdiag}\left(\Omega^k \otimes I_d,\Omega^k \otimes I_d\right)$ and $\Phi^k_a:=\text{blkdiag}\left(\Omega^k \otimes I_d,\Omega^k \otimes I_d,\Omega^k \otimes I_d\right)$, then \\$\mathbb{E}\left[\Phi^k\right]=\Phi:=\text{blkdiag}\left(\Omega \otimes I_d,\Omega \otimes I_d\right)$ and we have \\
\centerline{$\Phi^k T=\Phi^k \begin{bmatrix}I_d\quad 0\quad 0\\0 \,\quad I_d \quad 0\end{bmatrix}T_a=\begin{bmatrix}I_d\quad 0\quad 0\\0 \,\quad I_d \quad 0\end{bmatrix} \Phi_a^k T_a$,} which allows the analysis to carry for $T$ only.\\
For partial participation, we have $$v^{k+1}=v^k+\Phi^k\left(Tv^k-v^k\right),$$ i.e., $$\norm{v^{k+1}-v^\star}^2_{\Phi^{-1}}=\norm{v^k-v^\star+\Phi^k\left(Tv^k-v^k\right)}^2_{\Phi^{-1}}.$$ There are two  possibilities (depending on whether (\ref{imp}) is satisfied) either $$\norm{Tv^k-v^\star}^2\leq \frac{2\mu_1}{\mu_1+1}\norm{v^k-v^\star}^2$$ or $$\norm{Tv^k-v^\star}^2\leq \frac{2}{1-\mu_1}\omega^k,$$where $\omega^k \rightarrow 0$. For the first case we have 
\begin{footnotesize}
\begin{align*}
    \mathbb{E}^k\left[\norm{v^{k+1}-v^\star}^2_{\Phi^{-1}}\right] \leq \left(1-(1-\frac{2\mu_1}{\mu_1+1})q_{\min}\right) \norm{v^{k}-v^\star}^2_{\Phi^{-1}}.
\end{align*}
\end{footnotesize}
For the second we have
\begin{align*}
    &\norm{v^{k+1}-v^\star}^2_{\Phi^{-1}}=\norm{v^k-v^\star+\Phi^k\left(Tv^k-v^k\right)}^2_{\Phi^{-1}}\\=&\norm{\Phi^k(Tv^k-v^\star)+(I-\Phi^k)(v^k-v^\star)}^2_{\Phi^{-1}}\\\leq& \frac{1}{\beta}\norm{\Phi^k(Tv^k-v^\star)}^2_{\Phi^{-1}}+\frac{1}{1-\beta}\norm{(I-\Phi^k)(v^k-v^\star)}^2_{\Phi^{-1}},
\end{align*}
 taking expectation and noting that$$\mathbb{E}\left[\norm{(I-\Phi^k)(v^k-v^\star)}^2_{\Phi^{-1}}\right]\leq (1-q_{\min})^2\norm{v^k-v^\star}^2_{\Phi^{-1}},$$ implies that$$\mathbb{E}^k\left[\norm{v^{k+1}-v^\star}^2_{\Phi^{-1}}\right]\leq \frac{1}{\beta}\omega^k+\frac{(1-q_{\min})^2}{1-\beta}\norm{v^k-v^\star}^2_{\Phi^{-1}},$$ for any $\beta>0$, whence choosing $\beta=q_{\min}$, we obtain$$\mathbb{E}^k\left[\norm{v^{k+1}-v^\star}^2_{\Phi^{-1}}\right]\leq \frac{1}{q_{\min}}\omega^k+(1-q_{\min})\norm{v^k-v^\star}^2_{\Phi^{-1}}.$$
 By induction, we conclude that for both cases
 $$\mathbb{E}\left[\norm{v^{k+1}-v^\star}^2_{\Phi^{-1}}\right] \leq c_2 \lambda_1^k,$$
 where $\lambda_1$ is as Thm.~2 states.\\
An analogous use of Markov's inequality in the establishment of (\ref{possibility}) concludes there exists some $K_2$, such that$$\text{Pr}\left[\cap_{k=K_2}^\infty \norm{v^{k+1}-v^\star}^2_{\Phi^{-1}} \leq \lambda_p^k\right] \geq 1-\frac{\epsilon}{2},$$
where $\lambda_p$ can be arbitrarily chosen from $\left(\lambda_1,1\right)$, and hence finishes the proof.$\hfill\blacksquare$

\end{document}